\documentclass[article]{IEEEtran}

\usepackage{amsmath,amssymb,amsfonts,amscd}%
\usepackage{theorem}%
\usepackage{dsfont}%
\usepackage{graphicx}%
\usepackage{color}%
\usepackage{hyperref}%

\sloppy%
\setlength{\parindent}{0ex}%
\setlength{\parskip}{1ex}%

\allowdisplaybreaks

\theoremstyle{change}%

\newtheorem{definition}{Definition:}[section]%
\newtheorem{theorem}[definition]{Theorem:}%
\newtheorem{lemma}[definition]{Lemma:}%
\newtheorem{corollary}[definition]{Corollary:}%
{\theorembodyfont{\rmfamily} \newtheorem{remark}[definition]{Remark:}}%
{\theorembodyfont{\rmfamily} \newtheorem{example}[definition]{Example:}}%

\newcommand{\tm}{\times}%
\newcommand{\tp}{\mathrm{top}}%
\newcommand{\N}{\mathbb{N}}%
\newcommand{\Z}{\mathbb{Z}}%
\newcommand{\R}{\mathbb{R}}%
\newcommand{\supp}{\mathrm{supp}}%
\newcommand{\ep}{\varepsilon}%
\newcommand{\spn}{\mathrm{span}}%
\newcommand{\sep}{\mathrm{sep}}%
\newcommand{\rmd}{\mathrm{d}}%
\newcommand{\rme}{\mathrm{e}}%
\newcommand{\rmD}{\mathrm{D}}%
\newcommand{\cl}{\mathrm{cl}}%
\newcommand{\unit}{\mathds{1}}%

\newcommand{\AC}{\mathcal{A}}%
\newcommand{\BC}{\mathcal{B}}%
\newcommand{\EC}{\mathcal{E}}%
\newcommand{\FC}{\mathcal{F}}%
\newcommand{\GC}{\mathcal{G}}%
\newcommand{\IC}{\mathcal{I}}%
\newcommand{\MC}{\mathcal{M}}%
\newcommand{\TC}{\mathcal{T}}%

\newcommand{\T}{\mathbb{T}}%

\IEEEoverridecommandlockouts

\allowdisplaybreaks

\begin{document}

\title{Metric and topological entropy bounds for optimal coding of stochastic dynamical systems}

\author{Christoph Kawan and Serdar Y\"{u}ksel\thanks{C.~Kawan is with the Faculty of Computer Science and Mathematics, University of Passau, 94032 Passau, Germany (e-mail: christoph.kawan@uni-passau.de). S.~Y\"uksel is with the Department of Mathematics and Statistics, Queen's University, Kingston, Ontario, Canada, K7L 3N6 (e-mail: yuksel@mast.queensu.ca). This research was supported in part by the Natural Sciences and Engineering Research Council (NSERC) of Canada. Some results of this paper appeared in part at the 2017 IEEE International Symposium on Information Theory.}}%
\date{}%
\maketitle%

\begin{abstract}
We consider the problem of optimal zero-delay coding and estimation of a stochastic dynamical system over a noisy communication channel under three estimation criteria concerned with the low-distortion regime. The criteria considered are (i) a strong and (ii) a weak form of almost sure stability of the estimation error as well as (ii) quadratic stability in expectation. For all three objectives, we derive lower bounds on the smallest channel capacity $C_0$ above which the objective can be achieved with an arbitrarily small error. We first obtain bounds through a dynamical systems approach by constructing an infinite-dimensional dynamical system and relating the capacity with the topological and the metric entropy of this dynamical system. We also consider information-theoretic and probability-theoretic approaches to address the different criteria. Finally, we prove that a memoryless noisy channel in general constitutes no obstruction to asymptotic almost sure state estimation with arbitrarily small errors, when there is no noise in the system. The results provide new solution methods for the criteria introduced (e.g., standard information-theoretic bounds cannot be applied for some of the criteria) and establish further connections between dynamical systems, networked control, and information theory, and especially in the context of nonlinear stochastic systems.
\end{abstract}

\section{Introduction}

In this paper, we consider nonlinear stochastic systems given by an equation of the form%
\begin{equation}\label{eq_stochsys}
  x_{t+1} = f(x_t,w_t).%
\end{equation}
Here $x_t$ is the state at time $t$ and $(w_t)_{t\in\Z_+}$ is an i.i.d.~sequence of random variables with common distribution $w_t \sim \nu$, modeling the noise. In general, we assume that%
\begin{equation*}
  f:X \tm W \rightarrow X%
\end{equation*}
is a Borel measurable map, where $(X,d)$ is a complete metric space and $W$ a measurable space, so that for any $w\in W$ the map $f(\cdot,w)$ is a homeomorphism of $X$. We further assume that $x_0$ is a random variable on $X$ with an associated probability measure $\pi_0$, stochastically independent of $(w_t)_{t\in\Z_+}$. We use the notations%
\begin{equation*}
 f_w(x) = f(x,w),\qquad  f^x(w) = f(x,w).%
\end{equation*}
so that  $f_w:X \rightarrow X$ and $f^x:W \rightarrow X$.

The system \eqref{eq_stochsys} is connected over a possibly noisy channel with a finite capacity to an estimator, as shown in Fig.~\ref{LLL1}. The estimator has access to the information it has received through the channel. A source coder maps the source symbols (i.e., state values) to corresponding channel inputs. The channel inputs are transmitted through the channel; we assume that the channel is a discrete channel with input alphabet $\MC$ and output alphabet $\MC'$.%

\begin{figure}[htbp]
\begin{center}
\includegraphics[width=8.0cm,height=1.5cm,angle=0]{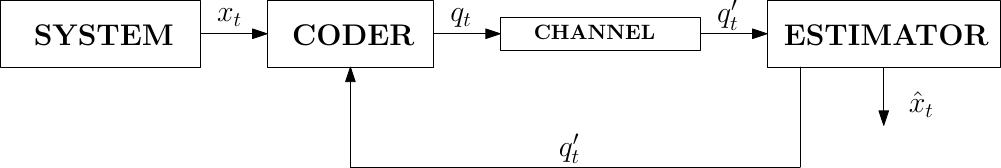}
\end{center}
\caption{Coding and state estimation over a noisy channel with feedback \label{LLL1}}
\end{figure}

We refer by a {\em coding policy} $\Pi$, to a sequence of functions $(\gamma^e_t)_{t\in\Z_+}$ which are causal such that the channel input at time $t$, $q_t \in \MC$, under $\Pi$ is generated by a function of its local information, i.e.,%
\begin{equation*}
  q_t = \gamma^e_t(\IC^e_t),%
\end{equation*}
where $\IC^e_t=\{x_{[0,t]}, q'_{[0,t-1]}\}$ and $q_t \in \MC$, the channel input alphabet given by $\MC = \{1,2,\ldots,M\},$ for $0 \leq t \leq T-1$. Here, we use the notation $x_{[0,t-1]} = \{x_s : 0 \leq s \leq t-1 \}$ for $t \geq 1$.%

The channel maps $q_t$ to $q'_t$ in a stochastic fashion so that $P(q'_t|q_t,q_{[0,t-1]},q'_{[0,t-1]})$ is a conditional probability measure on $\MC'$ for all $t \in \Z_+$. If this expression is equal to $P(q'_t|q_t)$, the channel is said to be memoryless, i.e., the past variables do not affect the channel output $q'_t$ given the current channel input $q_t$.%

The receiver, upon receiving the information from the channel, generates an estimate $\hat{x}_t$ at time $t$, also causally: An admissible causal {\em estimation policy} is a sequence of functions $(\gamma^d_t)_{t\in\Z_+}$ such that $\hat{x}_t = \gamma^d_t(q'_{[0,t]})$ with%
\begin{equation*}
  \gamma^d_t:(\MC')^{t+1} \to X, \quad t \geq 0.%
\end{equation*}

For a given $\ep>0$, we denote by $C_{\ep}$ the smallest channel capacity above which there exist an encoder and an estimator so that one of the following estimation objectives is achieved:%
\begin{enumerate}
\item[(E1)] Eventual almost sure stability of the estimation error: There exists $T(\ep)\geq0$ so that%
\begin{equation}\label{eq_BowenBall_obj}
  \sup_{t \geq T(\ep)} d(x_t,\hat{x}_t) \leq \ep \mbox{\quad a.s.}%
\end{equation}
\item[(E2)] Asymptotic almost sure stability of the estimation error:%
\begin{equation}\label{eq_AlmostSureBowenBall_obj}
  P\bigl(\limsup_{t \to \infty} d(x_t,\hat{x}_t) \leq \ep\bigr) = 1.%
\end{equation}
\item[(E3)] Asymptotic quadratic stability of the estimation error in expectation:%
\begin{equation}\label{eq_quadratic_obj}
  \limsup_{t \to \infty} E[d(x_t,\hat{x}_t)^2] \leq \ep.%
\end{equation}
\end{enumerate}

\subsection{Literature Review and Contributions}

In a recent work \cite{KawanYukselITarXiv}, we investigated the same problem for the special case involving only deterministic systems and discrete noiseless channels. In this paper, we will provide further connections between the ergodic theory of dynamical systems and information theory by answering the problems posed in the previous section and relating the answers to the concepts of either metric or topological entropy. Our findings complement and generalize our results in \cite{KawanYukselITarXiv} since here we consider stochasticity in the system dynamics and/or the communication channels.%

As we note in \cite{KawanYukselITarXiv}, optimal coding of stochastic processes is a problem that has been studied extensively; in information theory in the context of per-symbol cost minimization, in dynamical systems in the context of identifying representational and equivalence properties between dynamical systems, and in networked control in the context of identifying information transmission requirements for stochastic stability or cost minimization. As such, for the criteria laid out in (E1)-(E3) above, the results in our paper are related to the efforts in the literature in the following three general areas. %

{\bf Dynamical systems and ergodic theory.} Historically there has been a symbiotic relation between the ergodic theory of dynamical systems and information theory (see, e.g., \cite{ShieldsIT,Downarowicz} for comprehensive reviews). Information-theoretic tools have been foundational in the study of dynamical systems, for example the metric (also known as Kolmogorov-Sinai or measure-theoretic) entropy is crucial in the celebrated Shannon-McMillan-Breiman theorem as well as two important representation theorems: Ornstein's (isomorphism) theorem and the Krieger's generator theorem \cite{GrayProbabilit,Orn,Ornstein,katok2007fifty,Downarowicz}. The concept of sliding block encoding \cite{GrayIT} is a stationary encoding of a dynamical system defined by the shift process, leading to fundamental results on the existence of stationary codes which perform as good as the limit performance of a sequence of optimal block codes. For topological dynamical systems, the theory of entropy structures and symbolic extensions answers the question to which extent a system can be represented by a symbolic system (under preservation of some topological structure), cf.~\cite{Downarowicz} for an overview of this theory. Entropy concepts have extensive operational practical usage in identifying limits on source and channel coding for a large class of sources \cite{ShieldsIT,GrayIT,GrayNeuhoff}.%


{\bf Networked control and stochastic stability under information constraints.} In networked control, there has been a recurrent interest in identifying limitations on state estimation and control under information constraints. The results in this area have typically involved linear systems, and in the non-linear case the studies have only been on deterministic systems estimated/controlled over deterministic channels, with few exceptions. For linear systems, data-rate theorem type results have been presented in \cite{Brockett,TatikondaThesis,NairEvans,Matveev,MatveevSavkin}.%

The papers \cite{liberzon2016entropy,liberzon2017,pogromsky2011topological,MPo,MP2} studied state estimation for non-linear deterministic systems and noise-free channels. In \cite{liberzon2016entropy,liberzon2017}, Liberzon and Mitra characterized the critical data rate $C_0$ for exponential state estimation with a given exponent $\alpha\geq0$ for a continuous-time system on a compact subset $K$ of its state space. As a measure for $C_0$, they introduced a quantity called estimation entropy $h_{\mathrm{est}}(\alpha,K)$, which equals the topological entropy on $K$ in case $\alpha=0$, but for $\alpha>0$ is no longer a purely topological quantity. The paper \cite{kawan2016state} provided a lower bound on $h_{\mathrm{est}}(\alpha,K)$ in terms of Lyapunov exponents under the assumption that the system preserves a smooth measure. In \cite{MPo,MP2}, Matveev and Pogromsky studied three estimation objectives of increasing strength for discrete-time non-linear systems. For the weakest one, the smallest bit rate was shown to be equal to the topological entropy. For the other ones, general upper and lower bounds were obtained which can be computed directly in terms of the linearized right-hand side of the equation generating the system. 

A further closely related paper is due to Savkin \cite{Savkin06}, which uses topological entropy to study state estimation for a class of non-linear systems over noise-free digital channels. In fact, our results can be seen as stochastic analogues of some of the results presented in \cite{Savkin06}, which show that for sufficiently perturbed deterministic systems state estimation with arbitrarily small error is not possible over finite-capacity channels. See Remark \ref{SavkinRemark} for further discussions with regard to \cite{Savkin06}.

A related problem is the control of non-linear systems over communication channels. This problem has been studied in few publications, and mainly for deterministic systems and/or deterministic channels. Recently, \cite{yuksel2015stability} studied stochastic stability properties for a more general class of stochastic non-linear systems building on information-theoretic bounds and Markov-chain-theoretic constructions. However, these bounds do not distinguish between the unstable and stable components of the tangent space associated with a dynamical non-linear system, while the entropy bounds established in this paper make such a distinction, but only for estimation problems and in the low-distortion regime.%

{\bf Zero-delay coding over communication channels.}
In our setup, we have causality as a restriction in coding and decoding. Zero-delay coding is an increasingly important research area of significant practical relevance, as we review in \cite{KawanYukselITarXiv}. Notable papers include the classical works by Witsenhausen \cite{Witsenhausen}, Walrand \& Varaiya \cite{WalrandVaraiya} and Teneketzis \cite{Teneketzis}. The findings of \cite{WalrandVaraiya} have been generalized to continuous sources in \cite{YukIT2010arXiv} (see also \cite{YukLinZeroDelay} and \cite{BorkarMitterTatikonda}, where the latter imposes a structure apriori); and the structural results on optimal fixed-rate coding in \cite{Witsenhausen} and \cite{WalrandVaraiya} have been shown to be applicable to setups when one also allows for variable-length source coding in \cite{KMe1}. Structural results on coding over noisy channels have been studied in \cite{WalrandVaraiya,Teneketzis,MahTen09,wood2016optimal} among others. Related work also includes \cite{wood2016optimal,MahTen09,AsnaniWeissman,javidi2013dynamic} which have primarily considered the coding of discrete sources. \cite{wood2016optimal,YukLinZeroDelay,AsnaniWeissman,BorkarMitterTatikonda,javidi2013dynamic} have considered infinite horizon problems and, in particular, \cite{wood2016optimal} has established the optimality of stationary and deterministic policies for finite aperiodic and irreducible Markov sources. A related lossy coding procedure was introduced by Neuhoff and Gilbert \cite{NeuhoffGilbert}, called {\it causal source coding}, which has a different operational definition, since delays in coding and decoding are allowed so that efficiencies through entropy coding can be utilized. Further discussions on the literature are available in \cite{KMe1,YukLinZeroDelay,AsnaniWeissman,NayyarTeneketzis}. Among those that are most relevant to our paper is \cite{LinderZamir}, where causal coding under a high rate assumption for stationary sources and individual sequences was studied though only for a source coding context.

The setup with Gaussian channels is a special case studied extensively for the coding of linear systems. We will not consider such a setup in this paper; though we note that explicit results have been obtained for a variety of criteria in the literature.%


{\bf Contributions.} In view of this literature review, we make the following contributions. We establish that for (E1), the topological entropy of a properly defined infinite-dimensional dynamical system defining the stochastic evolution of the process provides lower bounds, for (E2) a lower bound is provided by the metric entropy and for (E3) the metric entropy of this system also provides a lower bound under a restriction on the class of encoders considered. Through a novel analysis which also recovers the widely studied linear case, we also provide achievability results for the case where the only stochasticity is in the communication channel and the initial state. We also establish impossibility results when noise is such that the process is sufficiently mixing. We show that our results reduce to those reported in \cite{KawanYukselITarXiv} for deterministic systems. An implication is that the rate bounds may not be continuously dependent on the presence of stochastic noise, i.e., an arbitrarily small noisy perturbation in the system dynamics may lead to a discontinuous change in the rate requirements for each of the criteria. Throughout the analysis, we provide further connections between information theory and dynamical systems by identifying the operational usage of entropy concepts for the three different estimation criteria.

\section{Preliminaries}\label{sec_prelim}

{\bf Notation:} All logarithms in this paper are taken to the base $2$. By $\N$ we denote the set of positive integers. We write $\Z$ for the set of all integers and $\Z_+ = \N \cup \{0\}$. By $\unit_A$ we denote the characteristic function of a set $A$. We write $B_{\ep}(x)$ for the open ball of radius $\ep>0$ centered at $x\in\R^N$. If $f:X\rightarrow Y$ is a measurable map between measurable spaces $(X,\FC)$ and $(Y,\GC)$, we write $f_*$ for the push-forward operator associated with $f$ on the space of measures on $(X,\FC)$, i.e., for any measure $\mu$ on $(X,\FC)$, $f_*\mu$ is the measure on $(Y,\GC)$ defined by $(f_*\mu)(G) := \mu(f^{-1}(G))$ for all $G\in\GC$.%

\subsection{Entropy notions for dynamical systems}

An important concept used in this paper is the topological entropy of a dynamical system. If $f:X \rightarrow X$ is a continuous map on a metric space $(X,d)$, and $K \subset X$ is a compact set, we say that $E \subset K$ is {\em $(n,\ep;f)$-separated} for some $n\in\N$ and $\ep>0$ if for all $x,y\in E$ with $x\neq y$, $d(f^i(x),f^i(y)) > \ep$ for some $i \in \{0,1,\ldots,n-1\}$. We write $r_{\sep}(n,\ep,K;f)$ for the maximal cardinality of an $(n,\ep;f)$-separated subset of $K$ and define the {\em topological entropy $h_{\tp}(f,K)$ of $f$ on $K$} by%
\begin{eqnarray*}
&&  h_{\sep}(f,\ep,K) := \limsup_{n\rightarrow\infty}\frac{1}{n}\log r_{\sep}(n,\ep,K;f),\nonumber \\
&& h_{\tp}(f,K) := \lim_{\ep\downarrow0}h_{\sep}(f,\ep,K).%
\end{eqnarray*}
If $X$ is compact and $K = X$, we omit the argument $K$ and call $h_{\tp}(f)$ the {\em topological entropy of $f$}. Alternatively, one can define $h_{\tp}(f,K)$ using $(n,\ep)$-spanning sets. A set $F \subset X$ {\em $(n,\ep)$-spans} another set $K \subset X$ if for each $x\in K$ there is $y\in F$ with $d(f^i(x),f^i(y)) \leq \ep$ for $i=0,1,\ldots,n-1$. Letting $r_{\spn}(n,\ep,K;f)$ (or $r_{\spn}(n,\ep,K)$ if the map $f$ is clear from the context) denote the minimal cardinality of a set which $(n,\ep)$-spans $K$, the topological entropy of $f$ on $K$ satisfies%
\begin{equation*}
  h_{\tp}(f,K) = \lim_{\ep\downarrow0}\limsup_{n\rightarrow\infty}\frac{1}{n}\log r_{\spn}(n,\ep,K;f).%
\end{equation*}
If $f:X \rightarrow X$ is a measure-preserving map on a probability space $(\Omega,\FC,\mu)$, i.e., $f_*\mu = \mu$, its {\em metric entropy} $h_{\mu}(f)$ is defined as follows. Let $\AC$ be a finite measurable partition of $X$. Then the entropy of $f$ with respect to $\AC$ is defined by%
\begin{equation}\label{eq_metent}
  h_{\mu}(f;\AC) := \lim_{n\rightarrow\infty}\frac{1}{n}H_{\mu}\Bigl(\bigvee_{i=0}^{n-1}f^{-i}\AC\bigr).%
\end{equation}
Here $\bigvee$ denotes the join operation, i.e., $\bigvee_{i=0}^{n-1}f^{-i}\AC$ is the partition of $X$ consisting of all intersections of the form $A_0 \cap f^{-1}(A_1) \cap \ldots \cap f^{-n+1}(A_{n-1})$ with $A_i \in \AC$. For any partition $\BC$ of $X$, $H_{\mu}(\BC) = -\sum_{B\in\BC} \mu(B)\log \mu(B)$ is the Shannon entropy of $\BC$. The existence of the limit in \eqref{eq_metent} follows from a subadditivity argument. The metric entropy of $f$ is then defined by%
\begin{equation*}
  h_{\mu}(f) := \sup_{\AC}h_{\mu}(f;\AC),%
\end{equation*}
the supremum taken over all finite measurable partitions $\AC$ of $X$. If $f$ is continuous, $X$ is compact metric and $\mu$ is ergodic, there is an alternative characterization of $h_{\mu}(f)$ due to Katok \cite{katok1980}:%

For any $n\in\N$, $\ep>0$ and $\delta \in (0,1)$ put%
\begin{eqnarray*}
&&  r_{\spn}(n,\ep,\delta)  \\
&&\quad  := \min\left\{ r_{\spn}(n,\ep;A) : A \subset X \mbox{ Borel},\ \mu(A) \geq 1-\delta \right\}.%
\end{eqnarray*}

Then for every $\delta \in (0,1)$ it holds that%
\begin{equation*}
  h_{\mu}(f) = \lim_{\ep\downarrow0}\limsup_{n\rightarrow\infty}\frac{1}{n}\log r_{\spn}(n,\ep,\delta).%
\end{equation*}
Topological and metric entropy are related to each other via the variational principle \cite{Mis}: For a continuous map $f:X\rightarrow X$ on a compact metric space $X$,%
\begin{equation*}
  h_{\tp}(f) = \sup_{\mu}h_{\mu}(f),%
\end{equation*}
the supremum taken over all $f$-invariant Borel probability measures $\mu$, i.e., such with $f_*\mu = \mu$.%

If two maps $f:X \rightarrow X$ and $g:Y \rightarrow Y$ on compact metric spaces $X$ and $Y$ satisfy $h \circ f = g \circ h$ with a homeomorphism $h:X \rightarrow Y$, they are called \emph{topologically conjugate} and $h$ is called a \emph{topological conjugacy}. In this case, the topological entropy of $f$ and $g$ is the same, i.e., $h_{\tp}(f) = h_{\tp}(g)$. If $h$ is only a continuous surjection from $X$ to $Y$, then $g$ is called a \emph{topological factor} of $f$ and $h_{\tp}(g) \leq h_{\tp}(f)$.%

\section{A Dynamical Systems Approach}\label{sec_dynsys}


In order to use the concepts of topological and metric entropy, defined for deterministic maps, we associate a shift map with the given stochastic system \eqref{eq_stochsys}. More precisely, we consider the space $X^{\Z_+}$ of all sequences in $X$, equipped with the product topology. We write $\bar{x} = (x_0,x_1,x_2,\ldots)$ for the elements of $X^{\Z_+}$ and we fix the product metric%
\begin{equation}
  D(\bar{x},\bar{y}) := \sum_{t=0}^{\infty}\frac{1}{2^t}\frac{d(x_t,y_t)}{1 + d(x_t,y_t)}, \label{productM}
\end{equation}
where $d(\cdot,\cdot)$ is the given metric on $X$. A natural dynamical system on $X^{\Z_+}$ is the shift map $\theta:X^{\Z_+} \rightarrow X^{\Z_+}$, $(\theta\bar{x})_t \equiv x_{t+1}$, which is continuous with respect to the product topology. An analogous shift map is defined on $W^{\Z_+}$ and denoted by $\vartheta$.%

Observing that the sequence of random variables $(x_t)_{t\in\Z_+}$ forms a Markov chain, when $x_0$ is fixed, the following lemma shows how a stationary measure of this Markov chain defines an invariant measure for $\theta$.%

\begin{lemma}\label{lem_mu}
Let $\pi$ be a stationary measure of the Markov chain $(x_t)_{t\in\Z_+}$. Then an invariant Borel probability measure $\mu$ for $\theta$ is defined on cylinder sets by%
\begin{align*}
  &\mu(B_0 \tm B_1 \tm \cdots \tm B_n \tm X^{[n+1,\infty)})\\
	&:= \int_{B_0 \tm B_1 \tm \cdots \tm B_n}\pi(\rmd x_0) P(\rmd x_1|x_0) \ldots P(\rmd x_n|x_{n-1}),%
\end{align*}
where $B_0,B_1,\ldots,B_n$ are arbitrary Borel sets in $X$. Here%
\begin{eqnarray*}
&&  P(x_{n+1} \in B | x_n = x) = P(f(x_n,w) \in B | x_n=x) \\
&&   = \nu((f^x)^{-1}(B)).%
\end{eqnarray*}
The support of $\mu$ is contained in the closure of the set of all trajectories, i.e.,
\[\supp\mu \subset \cl \TC \]
with
\begin{equation*}
  \TC := \bigl\{ \bar{x} \in X^{\Z_+}\ :\ \exists w_t \in W \mbox{ with } x_{t+1} \equiv f(x_t,w_t),\ t\in\Z_+ \bigr\}.%
\end{equation*}
\end{lemma}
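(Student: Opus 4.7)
The plan is to construct $\mu$ as a push-forward of a product measure, which makes both the invariance and the support statement essentially automatic, and then read off the cylinder formula as a consequence of Fubini. Let $\Omega_0 := X \tm W^{\Z_+}$ carry the product Borel $\sigma$-algebra and the product probability $\lambda := \pi \otimes \nu^{\otimes \Z_+}$, and define the trajectory map
\begin{equation*}
  \phi : \Omega_0 \rightarrow X^{\Z_+}, \qquad \phi\bigl(x,(w_t)_{t\geq 0}\bigr) := (y_0,y_1,y_2,\ldots),
\end{equation*}
where $y_0 := x$ and $y_{t+1} := f(y_t,w_t)$. Each coordinate of $\phi$ is a finite iteration of $f$ composed with coordinate projections and is hence Borel measurable, so $\mu := \phi_*\lambda$ is a well-defined Borel probability measure on $X^{\Z_+}$.

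I would then verify the cylinder formula by a Fubini computation: integrating the noise variables $w_{n-1},w_{n-2},\ldots,w_0$ out one at a time and using the identity $P(B|y) = \nu(\{w : f(y,w) \in B\})$ at each step converts $\lambda(\phi^{-1}(B_0 \tm \cdots \tm B_n \tm X^{[n+1,\infty)}))$ into the iterated integral stated in the lemma. Since cylinder sets generate the Borel $\sigma$-algebra on $X^{\Z_+}$, this also shows that $\mu$ is the unique measure satisfying the formula.

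For invariance, I would exploit an exact intertwining of $\theta$ with the skew shift $F:\Omega_0 \rightarrow \Omega_0$, $F(x,(w_t)_{t\geq 0}) := (f(x,w_0),(w_{t+1})_{t\geq 0})$. Unwinding the definitions yields $\theta \circ \phi = \phi \circ F$. On the other hand, $F_*\lambda = \lambda$: the stationarity $\pi P = \pi$ is precisely the statement that the first marginal is preserved under $F$, while the remaining coordinates of $F(x,\bar w)$ are $(w_1,w_2,\ldots)$, independent of $(x,w_0)$ under $\lambda$, so the product structure is retained. Hence $\theta_*\mu = \phi_* F_*\lambda = \phi_*\lambda = \mu$. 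For the support inclusion, note that $\phi(\Omega_0) \subset \TC$ by construction; if $\bar y \in \supp\mu$, then every open neighborhood $U$ of $\bar y$ satisfies $\mu(U) > 0$, hence $\phi^{-1}(U) \neq \emptyset$, whence $U \cap \TC \neq \emptyset$, placing $\bar y$ in $\cl\TC$.

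The only mildly delicate point is the Borel measurability of $\phi$, which reduces to the measurability of each coordinate and is guaranteed because $X$ and $W$ are Polish and $f$ is Borel. Apart from this bookkeeping, I do not anticipate a real obstacle: the push-forward viewpoint turns each of the three claims (cylinder formula, $\theta$-invariance, support inclusion) into a routine consequence of the single structural identity $\theta \circ \phi = \phi \circ F$ together with the stationarity of $\pi$.
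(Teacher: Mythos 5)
Your proposal is correct and follows essentially the same route as the paper: construct $\mu$ as the push-forward of $\pi \otimes \nu^{\otimes\Z_+}$ under the trajectory map, verify the cylinder formula by Fubini, and obtain $\theta$-invariance from the intertwining $\theta\circ\phi = \phi\circ F$ together with $F$-invariance of the product measure (which is exactly the stationarity $\pi P=\pi$). Your support argument is slightly cleaner than the paper's: by working pointwise with neighborhoods of points of $\supp\mu$, you sidestep any question of whether $\TC=\phi(\Omega_0)$ is Borel, whereas the paper's computation of $\mu(\cl\TC)$ passes (harmlessly, since $\phi^{-1}(\TC)=\Omega_0$) through the possibly non-measurable set $\TC$.
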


\begin{IEEEproof}
We consider the map%
\begin{equation*}
  G:X \tm W^{\Z_+} \rightarrow X^{\Z_+},%
\end{equation*}
which maps a pair $(x_0,\bar{w})$ with $\bar{w} = (w_t)_{t\in\Z_+}$ to the trajectory $(x_t)_{t\in\Z_+}$ obtained by $x_{t+1} := f(x_t,w_t)$. We claim that this map is measurable and its associated push-forward operator on measures maps $\pi \tm \nu^{\Z_+}$ to $\mu$. To prove that $G$ is measurable, consider a cylinder set $A = B_0 \tm \cdots \tm B_n \tm X^{[n+1,\infty)}$ in $X^{\Z_+}$. Then%
\begin{align*}
  &G^{-1}(A) \\ 
  &= \left\{ (x_0,\bar{w}) : x_0 \in B_0, G(x_0,\bar{w})_1 \in B_1,\ldots, G(x_0,\bar{w})_n \in B_n \right\}.%
\end{align*}
Hence, $G^{-1}(A)$ can be expressed as the preimage of $B_0 \tm \cdots \tm B_n \subset X^{n+1}$ under the map%
\begin{equation*}
  (x_0,\bar{w}) \mapsto (x_0,f_{w_0}(x_0),f_{w_1} \circ f_{w_0}(x_0),\ldots,f_{w_{n-1}} \circ \cdots \circ f_{w_0}(x_0)).%
\end{equation*}
To show that this map is measurable, it suffices to show that each component is a measurable map. This follows from the fact that the projection $W^{\Z_+} \rightarrow W^{n+1}$ to the first $n+1$ components is measurable and $f$ is measurable. Hence, we have proved that $G$ is measurable. To see that $G_*(\pi \tm \nu^{\Z_+}) = \mu$, observe that for a set of the form $A = B_0 \tm B_1 \tm X^{[2,\infty)}$ we have%
\begin{eqnarray}
  && \pi \tm \nu^{\Z_+}(\{(x_0,\bar{w}) : x_0 \in B_0, f_{w_0}(x_0) \in B_1\}) \nonumber \\
&&  = \int_X \int_{W^{\Z_+}}\nu^{\Z_+}(\rmd\bar{w}) \pi(\rmd x_0) \unit_{B_0}(x_0) \unit_{B_1}(f_{w_0}(x_0)) \nonumber\\
&& = \int_{B_0} \pi(\rmd x_0) \int_W \nu(\rmd w) \unit_{B_1}(f_w(x_0)) \nonumber \\
&&= \int_{B_0} \pi(\rmd x_0) \nu(\{ w\in W : f_w(x_0) \in B_1 \}) \nonumber\\
&&= \mu(B_0 \tm B_1 \tm X^{[2,\infty)}).%
\end{eqnarray}
For more general cylinder sets, the claim follows inductively. The fact that $\supp\mu$ is contained in $\cl\TC$ follows from%
\begin{align*}
  \mu(\cl \TC) &= G_*[\pi \tm \nu^{\Z_+}](\cl \TC) = \pi \tm \nu^{\Z_+}(G^{-1}(\cl\TC))\\
	             &   \geq \pi \tm \nu^{\Z_+}(G^{-1}(\TC))\\
	             &= \pi \tm \nu^{\Z_+}(G^{-1}(G(X \tm W^{\Z_+}))) \\
	             & \pi \tm \nu^{\Z_+}(X \tm W^{\Z_+}) = 1.%
\end{align*}
Finally, we show that $\mu$ is $\theta$-invariant. To this end, note that the map $\Phi:X \tm W^{\Z_+} \rightarrow X \tm W^{\Z_+}$, $(x,\bar{w}) \mapsto (f(x,w_0),\vartheta\bar{w})$, satisfies $\theta \circ G = G \circ \Phi$. Using that%
\begin{align*}
  &\pi \tm \nu^{\Z_+}(\Phi^{-1}(A \tm B))\\
   &= \pi \tm \nu^{\Z_+}(\{(x_0,\bar{w}) : f_{w_0}(x_0) \in A,\ \vartheta\bar{w} \in B\})\\
	&= \pi \tm \nu^{\Z_+}\Bigl(\bigcup_{x_0\in X} \{x_0\} \tm ( (f^{x_0})^{-1}(A) \tm B ) \Bigr)\\
  &= \int_X \pi(\rmd x_0) \nu (\{ w: f(x_0,w) \in A \}) \nu^{\Z_+}(B) \\
	&= \nu^{\Z_+}(B) \int_X \pi(\rmd x) P(x,A)\\
	&= \pi(A) \nu^{\Z_+}(B) = \pi \tm \nu^{\Z_+}(A \tm B),%
\end{align*}
i.e., $\Phi_* (\pi \tm \nu^{\Z_+}) = \pi \tm \nu^{\Z_+}$, we find that%
\begin{equation*}
  \theta_*\mu = \theta_* G_* (\pi \tm \nu^{\Z_+}) = G_* \Phi_* (\pi \tm \nu^{\Z_+}) = G_* (\pi \tm \nu^{\Z_+}) = \mu,%
\end{equation*}
completing the proof.%
\end{IEEEproof}

We will also need the following characterization of topological entropy.%

\begin{lemma}\label{lem_entr_timeshift}
Let $f:X\rightarrow X$ be a homeomorphism on a compact metric space $(X,d)$. Fix $\ep>0$ and $n_0 \in \N$. For $n>n_0$ we say that a set $E \subset X$ is $(n,\ep;n_0)$-separated if $d(f^i(x),f^i(y)) > \ep$ for some $i \in \{n_0,n_0+1,\ldots,n-1\}$, whenever $x,y\in E$ with $x\neq y$. We write $r_{\sep}(n,\ep;n_0,f)$ for the maximal cardinality of an $(n,\ep;n_0)$-separated set. Then, for any choice of $n_0(\ep) \in \N$, $\ep>0$, we have%
\begin{equation}\label{eq_entr_timeshift}
  h_{\tp}(f) = \lim_{\ep\downarrow0}\limsup_{n_0(\ep)<n\rightarrow\infty}\frac{1}{n}\log r_{\sep}(n,\ep;n_0(\ep)).%
\end{equation}
\end{lemma}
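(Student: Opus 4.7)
The plan is to reduce the modified separation count to the standard one by a time-shift, exploiting the invertibility of $f$. Since $f^{n_0}$ is a homeomorphism, the map $E \mapsto f^{n_0}(E)$ is a cardinality-preserving bijection on subsets of $X$, and it converts the $(n,\epsilon;n_0)$-separation condition into ordinary $(n-n_0,\epsilon)$-separation. That correspondence is the entire engine of the proof.

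The first step is to establish the identity $r_{\sep}(n,\epsilon;n_0,f) = r_{\sep}(n-n_0,\epsilon;f)$ for $n > n_0$. Indeed, if $E$ is $(n,\epsilon;n_0)$-separated, then for any two distinct $x,y \in E$ there is some $i \in \{n_0,\ldots,n-1\}$ with $d(f^i x, f^i y) > \epsilon$; writing $j := i - n_0$ and $E' := f^{n_0}(E)$, this rephrases as $d(f^j x', f^j y') > \epsilon$ for some $j \in \{0,\ldots,n-n_0-1\}$, i.e., $E'$ is $(n-n_0,\epsilon)$-separated in the standard sense. The reverse implication is obtained analogously via $f^{-n_0}$, and the cardinalities match because $f^{n_0}$ is a bijection. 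The second step is the asymptotic calculation: for fixed $\epsilon>0$ and the corresponding $n_0(\epsilon)\in\N$, the substitution $m := n - n_0(\epsilon)$ yields
\begin{equation*}
  \limsup_{n\to\infty}\frac{1}{n}\log r_{\sep}(n,\epsilon;n_0(\epsilon),f) = \limsup_{m\to\infty}\frac{1}{m+n_0(\epsilon)}\log r_{\sep}(m,\epsilon;f) = h_{\sep}(f,\epsilon),
\end{equation*}
since $m/(m+n_0(\epsilon))\to 1$ with $n_0(\epsilon)$ held fixed and $\log r_{\sep}(m,\epsilon;f) \geq 0$. Taking $\epsilon \downarrow 0$ produces $h_{\tp}(f)$ by the definition recalled in Section \ref{sec_prelim}.

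There is essentially no obstacle: the content of the lemma lies entirely in the invertibility of $f$, which is precisely what allows the separation window $\{n_0,\ldots,n-1\}$ to be shifted back to the canonical window $\{0,\ldots,n-n_0-1\}$ of length $n-n_0$. The apparent subtlety that $n_0$ may depend on $\epsilon$ is in fact irrelevant to the inner limsup, because for each fixed $\epsilon>0$ the quantity $n_0(\epsilon)$ is just a fixed natural number that washes out under the $1/n$ normalization as $n\to\infty$; the usual $\lim_{\epsilon\downarrow0}$ then completes the identification with the topological entropy.
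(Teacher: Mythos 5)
Your proof is correct and takes essentially the same approach as the paper: both hinge on the observation that, because $f$ is a homeomorphism, applying $f^{\pm n_0}$ shifts the separation window and preserves cardinalities, so the $n_0(\epsilon)$-delayed count is asymptotically indistinguishable from the standard one once normalized by $1/n$. The only cosmetic difference is that you establish the exact identity $r_{\sep}(n,\epsilon;n_0,f)=r_{\sep}(n-n_0,\epsilon;f)$, whereas the paper deduces the two inequalities separately (the trivial containment for one direction, the $f^{-n_0}$ shift for the other); the substance is the same.
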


\begin{IEEEproof}
Any $(n,\ep;n_0(\ep))$-separated set is trivially $(n,\ep)$-separated, hence $r_{\sep}(n,\ep) \geq r_{\sep}(n,\ep;n_0(\ep))$, implying the inequality ``$\geq$'' in \eqref{eq_entr_timeshift}. Conversely, assume that $E$ is $(n,\ep)$-separated and put $E' := f^{-n_0(\ep)}(E)$. Then $|E'| = |E|$ and $E'$ is $(n_0(\ep)+n,\ep;n_0(\ep))$-separated. This implies%
\begin{eqnarray*}
&&  \frac{n+n_0(\ep)}{n}\frac{1}{n+n_0(\ep)}\log r_{\sep}(n_0(\ep)+n,\ep;n_0(\ep)) \\
&& \quad \quad  \geq \frac{1}{n}\log r_{\sep}(n,\ep).%
\end{eqnarray*}
Letting $n\rightarrow\infty$ on both sides, we find that%
\begin{equation*}
  \limsup_{n_0(\ep)<n\rightarrow\infty}\frac{1}{n}\log r_{\sep}(n,\ep;n_0(\ep)) \geq h_{\sep}(f,\ep).%
\end{equation*}
Finally, letting $\ep\downarrow0$, the desired inequality follows.%
\end{IEEEproof}

In the following, we assume that the channel is noiseless. In particular, its capacity is given by $C = \log|\MC|$, where $\MC = \MC'$ is the coding alphabet. We will derive lower bounds on $C_0$ for the objectives (E1)--(E3).%

\begin{theorem}\label{thm_topent_lb}
Consider the estimation objective (E1) for an initial measure $\pi_0$ which is stationary under the Markov chain $(x_t)_{t\in\Z_+}$. If $\supp\mu$ is not compact, we have $C_0 = \infty$. Otherwise,%
\begin{equation*}
  C_0 \geq h_{\tp}(\theta_{|\supp\mu}).%
\end{equation*}
\end{theorem}

\begin{IEEEproof}
Assume that for some $\ep > 0$ the objective \eqref{eq_BowenBall_obj} is achieved by a coder-estimator pair via a noiseless channel of capacity $C = \log|\MC|$. Then for every $k\in\N$ we define the set%
\begin{equation*}
  \EC_k := \left\{ (\hat{x}_0,\hat{x}_1,\ldots,\hat{x}_{k-1}) : q_t \in \MC,\ 0 \leq t \leq k-1 \right\}%
\end{equation*}
of all possible estimation sequences of length $k$ the estimator can generate in the time interval $[0,k-1]$.%

Assume to the contrary that there exists a measurable set $A \subset X^{\Z_+}$ of positive measure $\alpha := \mu(A) > 0$ so that for every $\bar{x} = (x_t)_{t\in\Z_+} \in A$ there is $t \geq T(\ep)$ with $d(x_t,\hat{x}_t) > \ep$ in case the sequence $(x_t)$ is realized as a trajectory of the system. If $G:X\tm W^{\Z_+}\rightarrow X^{\Z_+}$ is the map from the proof of Lemma \ref{lem_mu}, then the preimage $G^{-1}(A)$ is measurable in $X \tm W^{\Z_+}$ with $\pi_0 \tm \nu^{\Z_+}$-measure $\alpha>0$. This contradicts the assumption that the almost sure estimation objective \eqref{eq_BowenBall_obj} is achieved. Hence, the set%
\begin{equation*}
  \tilde{\TC} := \bigl\{ \bar{x} \in X^{\Z_+}\ :\ d(x_t,\hat{x}_t) \leq \ep \mbox{ for all } t \geq T(\ep) \bigr\}%
\end{equation*}
has measure one and consequently is dense in $\supp\mu$.%

Choose $\tau = \tau(\ep)$ large enough so that%
\begin{equation*}
  \sum_{t=\tau}^{\infty}\frac{1}{2^t} \leq \ep.%
\end{equation*}
Let $E \subset \supp\mu$ be a finite $(k,5\ep;T(\ep))$-separated set for some $k > T(\ep)$. Since $\tilde{\TC}$ is dense in $\supp\mu$, a small perturbation of $E$ yields a $(k,5\ep;T(\ep))$-separated set in $\tilde{\TC}$ with the same cardinality as $E$ (using that $\theta$ is continuous). Hence, we may assume $E \subset \tilde{\TC}$. We define a map $\alpha:E \rightarrow \EC_{k+\tau}$ by assigning to $(x_t)_{t\in\Z_+} \in E$ the estimation sequence generated by the estimator when it receives the signals $q_t = q_t(x_0,\ldots,x_t)$ for $t = 0,1,\ldots,k+\tau-1$.%

Assuming $\alpha(\bar{x}) = \alpha(\bar{y})$ for some $\bar{x},\bar{y} \in E$, we find for $T(\ep) \leq t \leq k$ that%
\begin{align*}
  &D(\theta^t(\bar{x}),\theta^t(\bar{y})) \\
  &\leq \sum_{s=0}^{\tau-1}\frac{1}{2^s}\frac{d(x_{t+s},y_{t+s})}{1 + d(x_{t+s},y_{t+s})} + \sum_{s=\tau}^{\infty}\frac{1}{2^s}\\
	                                    &\leq \sum_{s=0}^{\tau-1}\frac{1}{2^s}d(x_{t+s},\hat{x}_{t+s}) + \sum_{s=0}^{\tau-1}\frac{1}{2^s}d(\hat{y}_{t+s},y_{t+s}) + \ep\\
																			&\leq 2\ep + 2\ep + \ep = 5\ep,%
\end{align*}
implying $\bar{x} = \bar{y}$, since $E$ is $(k,5\ep;T(\ep))$-separated. Hence, the map $\alpha$ is injective.%

The set $\supp\mu$ is a closed subset of the complete metric space $(X^{\Z_+},D)$, hence it is also a complete metric space. If we assume that $\supp\mu$ is not compact, it thus follows that $\supp\mu$ is not totally bounded, implying that $(k,5\ep;T(\ep))$-separated subsets of $\supp\mu$ of arbitrarily large (finite) cardinality exist. Hence, $\EC_{k+\tau}$ must be infinite, leading to the contradiction $|\MC| = \infty$. Consequently, in this case the estimation problem cannot be solved via a channel of finite capacity.%

Now assume that $\supp\mu$ is compact. Choosing a maximal $(k,5\ep;T(\ep))$-separated set $E$, for the dynamical system $\theta_{|\supp\mu}:\supp\mu \rightarrow \supp\mu$ we obtain the inequality%
\begin{equation*}
  r_{\sep}(k,5\ep;T(\ep)) \leq |\EC_{k+\tau}| \leq |\MC|^{k+\tau}.%
\end{equation*}
This implies%
\begin{equation*}
  \limsup_{k\rightarrow\infty}\frac{1}{k}\log r_{\sep}(k,5\ep;T(\ep)) \leq \log|\MC| = C.%
\end{equation*}
Using Lemma \ref{lem_entr_timeshift}, the result follows by letting $C\rightarrow C_{\ep}$ and $\ep\downarrow0$. The statement about the metric entropy now follows from the variational principle.%
\end{IEEEproof}

\begin{remark}
To make the statement of the theorem clearer, let us consider the two extreme cases when there is no noise and when there is only noise:%
\begin{enumerate}
\item[(i)] If the system is deterministic, i.e., $x_{t+1} = f(x_t)$ for a homeomorphism $f:X\rightarrow X$ of a compact metric space $X$, then $\pi_0$ is an invariant measure of $f$. Moreover, $P(x_t \in B|x_{t-1} = x) = 1$ if $f(x) \in B$ and $0$ otherwise, implying%
\begin{align*}
  & \mu(B_0 \tm B_1 \tm \cdots \tm B_n \tm X^{[n+1,\infty)})\\
	&= \pi_0\bigl(B_0 \cap f^{-1}(B_1) \cap f^{-2}(B_2) \cap \ldots \cap f^{-n}(B_n)\bigr).%
\end{align*}
From this expression, we see that the support of $\mu$ is contained in the set $\TC$ of all trajectories of $f$ (which in this case coincides with its closure), as already proved in Lemma \ref{lem_mu}. The map $h:\TC \rightarrow X$ defined by $h(\bar{x}) := x_0$, is easily seen to be a homeomorphism, which conjugates $\theta_{|\TC}$ and $f$. That is, the following diagram commutes:%
\begin{equation*}
  \begin{CD}
    \TC @>\theta>> \TC\\
    @V h VV @VV h V\\
    X @>>f> X
  \end{CD}
\end{equation*}
Since $h_*\mu = \pi_0$ and conjugate systems have the same entropy, our theorem implies%
\begin{equation}\label{eq_detlbe1}
  C_0 \geq h_{\tp}(f;\supp\pi_0).%
\end{equation}
The right-hand side of this inequality is finite under mild assumptions, e.g., if $f$ is Lipschitz continuous on $\supp\pi_0$ and $\supp\pi_0$ has finite lower box dimension (see \cite[Thm.~6.1.2]{Boichenko_etal}). These conditions are in particular satisfied when $f$ is a diffeomorphism on a finite-dimensional manifold. However, one should be aware that even on a compact interval there exist continuous maps with infinite topological entropy on the support of an invariant measure. The lower bound \eqref{eq_detlbe1} has already been derived in \cite[Thm.~III.1]{KawanYukselITarXiv}, and in fact for the deterministic case considered here the bound was shown to be tight.%
\item[(ii)] Assume that $X = W$ is compact and the system is given by $x_{t+1} = w_t$, i.e., the trajectories are only determined by the noise. In this case, with $\pi_0 := \nu$, the measure $\mu$ is the product measure $\nu^{\Z_+}$. Hence, $C_0$ is bounded below by the topological entropy of the shift on $W^{\Z_+}$ restricted to $\supp\nu^{\Z_+} = (\supp\nu)^{\Z_+}$. This number is finite if and only if $\supp\nu$ is finite and in this case is given by $\log|\supp\nu|$. 
\end{enumerate}
\end{remark}

If the system is not deterministic, then usually $C_0 = \infty$. In fact, this is always the case if the estimator is able to recover the noise to a sufficiently large extent. The following corollary treats the case, when the noise can be recovered completely from the state trajectory.%

\begin{corollary}\label{cor_inv}
Additionally to the assumptions in Theorem \ref{thm_topent_lb}, suppose that $W$ and $X$ are compact and $f^x:W \rightarrow X$ is invertible for every $x\in X$ so that $(x,y) \mapsto (f^x)^{-1}(y)$ is continuous. Then, for (E1),%
\begin{equation}\label{eq_c0_ilb}
  C_0 \geq h_{\tp}(\Phi_{|\supp(\pi_0 \tm \nu^{\Z_+})}) \geq h_{\tp}(\vartheta_{|\supp\nu^{\Z_+}}),%
\end{equation}
where $\Phi:X \tm W^{\Z_+} \rightarrow X \tm W^{\Z_+}$ is the skew-product map $(x,\bar{w}) \mapsto (f_{w_0}(x),\vartheta\bar{w})$. As a consequence, $C_0 = \infty$ whenever $\supp\nu$ contains infinitely many elements.%
\end{corollary}

\begin{IEEEproof}
We consider the map $h:X^{\Z_+} \rightarrow W^{\Z_+}$, $\bar{x} \mapsto \bar{w} = (w_t)_{t\in\Z_+}$ with%
\begin{equation*}
  w_t = (f^{x_t})^{-1}(x_{t+1}).%
\end{equation*}
If we equip $W^{\Z_+}$ with the product topology, $h$ becomes continuous. Indeed, if the distance of two points $\bar{x}^1,\bar{x}^2 \in X^{\Z_+}$ is small, then the distances $d_X(\bar{x}^1_t,\bar{x}^2_t)$ are small for finitely many values of $t$. Hence, by the uniform continuity of $(x,y) \mapsto f_x^{-1}(y)$ on the compact space $X\tm X$, also the distances $d_W(h(\bar{x}^1)_t,h(\bar{x}^2)_t)$ can be made small for sufficiently many values of $t$, guaranteeing that $D(h(\bar{x}^1),h(\bar{x}^2))$ becomes small, where $D$ is a product metric on $W^{\Z_+}$.%

The map $G:X\tm W^{\Z_+} \rightarrow X^{\Z_+}$, used in the proof of Lemma \ref{lem_mu}, satisfies%
\begin{equation*}
  h(G(x_0,\bar{w})) = \bar{w} \mbox{\quad for all\ } (x_0,\bar{x}) \in X \tm W^{\Z_+},%
\end{equation*}
because we can write%
\begin{equation*}
  G(x_0,\bar{w}) = (x_0,f^{x_0}(w_0),f^{x_1}(w_1),f^{x_2}(w_2),\ldots).%
\end{equation*}
Consequently, $G$ - as a map from $X \tm W^{\Z_+}$ to the space $\TC$ of trajectories -  is invertible with%
\begin{equation*}
  G^{-1}(\bar{x}) = (x_0,h(\bar{x})).%
\end{equation*}
From the assumptions it follows that $G$ is continuous, hence $G$ is a homeomorphism and $\TC$ is compact. By the proof of Lemma \ref{lem_mu}, we have $\theta \circ G = G \circ \Phi$, where $\Phi$ is the skew-product map $\Phi(x,\bar{w}) = (f(x,w_0),\vartheta\bar{w})$ and $G_* (\pi_0 \tm \nu^{\Z_+}) = \mu$. Hence, $G$ is a topological conjugacy between $\theta_{|\supp\mu}$ and $\Phi_{|\supp(\pi_0\tm\nu^{\Z_+})}$, implying%
\begin{equation*}
  C_0 \geq h_{\tp}(\theta_{|\supp\mu}) = h_{\tp}(\Phi_{|\supp(\pi_0 \tm \nu^{\Z_+})}).%
\end{equation*}
Since the projection map $\pi:(x,\bar{w}) \mapsto \bar{w}$ exhibits $\vartheta$ as a topological factor of $\Phi$ and $\pi_* (\pi_0 \tm \nu^{\Z_+}) = \nu^{\Z_+}$, the second inequality in \eqref{eq_c0_ilb} follows.%
\end{IEEEproof}

\begin{example}
Let $X = W = S^1 = \R / \Z$. Let $f(x,w) = x + w \mbox{ mod } 1$ and let $\pi_0 = \nu$ be the normalized Lebesgue measure on $S^1$. In this case, the map $f^x:S^1 \rightarrow S^1$, $w \mapsto x + w$, is obviously invertible and $(x,y) \mapsto (f^x)^{-1}(y) = x - y$ is continuous. Hence, $C_0 = \infty$ for the estimation objective (E1).%
\end{example}

\begin{theorem}\label{thm_metricent_lb_det}
Consider the estimation objective (E2) for an initial measure $\pi_0$ which is stationary and ergodic under the Markov chain $(x_t)_{t\in\Z_+}$. Then, if $\supp\mu$ is compact,
\begin{equation*}
  C_0 \geq h_{\mu}(\theta).%
\end{equation*}
\end{theorem}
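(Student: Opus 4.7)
The plan is to mimic the proof of Theorem \ref{thm_topent_lb} while replacing topological-entropy bookkeeping with Katok's variational characterization of metric entropy from Section \ref{sec_prelim}. The latter applies because $\theta|_{\supp\mu}$ is continuous on the compact metric space $\supp\mu$ and $\mu$ is $\theta$-ergodic, which follows from the Markov structure of $\mu$ in Lemma \ref{lem_mu} together with ergodicity of $\pi_0$ for the chain (a standard fact).

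Suppose a coder/estimator pair achieves \eqref{eq_AlmostSureBowenBall_obj} with accuracy $\epsilon>0$ via a noiseless channel of capacity $C=\log|\MC|$, and fix $\delta\in(0,1)$. The a.s.\ objective combined with continuity of measure yields a time $T_0=T_0(\epsilon,\delta)$ such that $P(d(x_t,\hat x_t)\le 2\epsilon\text{ for all } t\ge T_0)\ge 1-\delta$. In the noiseless setting $\hat x_t$ is a deterministic measurable function of $x_{[0,t]}$, so this event is the $G$-preimage of a Borel set $A\subset X^{\Z_+}$, and the identity $G_*(\pi_0\tm\nu^{\Z_+})=\mu$ from Lemma \ref{lem_mu} gives $\mu(A)\ge 1-\delta$; intersecting with $\supp\mu$ keeps the measure at least $1-\delta$.

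Choose $\tau=\tau(\epsilon)$ with $\sum_{t\ge\tau}2^{-t}\le\epsilon$, set $m:=n+T_0+\tau$, and partition $A\cap\supp\mu$ according to two pieces of data: (i) which ball of a finite $\epsilon$-net of the compact projection $\{(x_0,\ldots,x_{T_0-1}):\bar x\in\supp\mu\}\subset X^{T_0}$ (in the max metric) contains the $T_0$-prefix of $\bar x$, and (ii) the estimation sequence $(\hat x_0,\ldots,\hat x_{m-1})\in\EC_m$. This yields at most $N_0|\MC|^m$ classes, where $N_0=N_0(\epsilon,T_0)$ is the cardinality of the net; choosing one representative per non-empty class produces $F\subset A\cap\supp\mu$ with $|F|\le N_0|\MC|^m$. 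For $\bar x,\bar y$ in the same class, the triangle inequality through the common estimation sequence gives $d(x_t,y_t)\le 4\epsilon$ for $T_0\le t\le m-1$, while the net gives $d(x_t,y_t)\le 2\epsilon$ for $0\le t<T_0$. Splitting $D(\theta^i\bar x,\theta^i\bar y)$ at index $\tau$ and noting that $i+s\le m-1$ whenever $0\le i\le n-1$ and $0\le s\le\tau-1$, one obtains $D(\theta^i\bar x,\theta^i\bar y)\le 2\cdot 4\epsilon+\epsilon=9\epsilon$ for every $0\le i\le n-1$; hence $F$ is an $(n,9\epsilon)$-spanning set of $A\cap\supp\mu$ under $\theta$.

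Consequently $r_{\spn}(n,9\epsilon,\delta;\theta|_{\supp\mu})\le N_0|\MC|^m$; dividing by $n$, letting $n\to\infty$ with $T_0,\tau,N_0$ held fixed, and then invoking Katok's formula yields $h_\mu(\theta)\le\log|\MC|=C$. Letting $C\downarrow C_\epsilon$ and then $\epsilon\downarrow 0$ (which exhausts the outer $\epsilon$-limit in Katok's formula through $\eta=9\epsilon$) gives $h_\mu(\theta)\le C_0$. I expect the main obstacle to be handling the initial segment $[0,T_0-1]$, on which the asymptotic objective \eqref{eq_AlmostSureBowenBall_obj} provides no estimate on $d(x_t,\hat x_t)$; this is exactly where compactness of $\supp\mu$ enters, via a finite $\epsilon$-net on the $T_0$-prefix whose cardinality $N_0$ does not depend on $n$ and therefore disappears in the normalized limit.
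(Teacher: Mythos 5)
Your proof is correct and reaches the same conclusion via Katok's characterization, but the mechanism you use to handle the ``burn-in'' interval $[0,T_0-1]$ is genuinely different from the paper's. The paper (following the pattern of Theorem~\ref{thm_topent_lb} and spelled out in the proof of Theorem~\ref{thm_metricent_lb_rds}) works with sets that are separated only on the time window $[K,n-1]$ and then \emph{shifts} by $\theta^K$, using invariance of $\mu$ (respectively the sample measures in the RDS case) to transport the measure bound on $B^K(\delta)$ to $\theta^K(B^K(\delta))$ and thereby reduce to an ordinary $(n-K,\epsilon)$-separated/spanning set to which Katok applies. You instead stay at time~$0$: you directly build an $(n,9\epsilon)$-\emph{spanning} set by covering the $T_0$-prefix projection of $\supp\mu$ with a finite $\epsilon$-net of cardinality $N_0$ and crossing it with the $\le|\MC|^m$ possible estimation sequences; since $N_0$ is independent of $n$, it washes out in the normalized limit. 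This avoids any analogue of Lemma~\ref{lem_entr_timeshift} and any reasoning about $\theta^K(B^K(\delta))$, at the modest cost of the extra factor $N_0$. Both approaches rely on compactness of $\supp\mu$ (the paper to make Katok available at all, you additionally for the net on the prefix), and both rely on ergodicity of $\mu$ inherited from ergodicity of $\pi_0$.

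One phrasing slip worth fixing: you write that after $n\to\infty$ ``invoking Katok's formula yields $h_\mu(\theta)\le\log|\MC|=C$.'' At fixed $\epsilon$ you only control $\limsup_n\frac1n\log r_{\spn}(n,9\epsilon,\delta)$, which is $\le h_\mu(\theta)$, not $\ge$; the inequality $h_\mu(\theta)\le\lim_{\epsilon\downarrow0}C_\epsilon=C_0$ only materializes once you take the outer Katok limit $\epsilon\downarrow0$ simultaneously with $C_\epsilon\to C_0$. Your final parenthetical shows you understand this, but the intermediate claim ``$h_\mu(\theta)\le C$'' should be deferred until after the $\epsilon\downarrow0$ step. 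The argument itself is sound: $\limsup_n\frac1n\log r_{\spn}(n,9\epsilon,\delta)\le C_\epsilon$ for every $\epsilon>0$, and both sides increase to $h_\mu(\theta)$ and $C_0$ respectively as $\epsilon\downarrow0$.
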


\begin{IEEEproof}
First observe that the ergodicity of $\pi_0$ implies the ergodicity of $\mu$. Indeed, it is well-known that the product measure $\pi_0 \tm \nu^{\Z_+}$ is ergodic for the skew-product $\Phi$ if $\pi_0$ is ergodic (cf.~\cite{LedrappierYoung}). Since $G_*(\pi_0 \tm \nu^{\Z_*}) = \mu$ and $\theta \circ G = G \circ \Phi$, this implies the ergodicity of $\mu$. Now consider a noiseless channel with input alphabet $\MC$ and a pair of coder and decoder/estimator which solves the estimation problem (E2) for some $\ep>0$. For every $\bar{x} \in X^{\Z_+}$ and $\delta>\ep$ let%
\begin{equation*}
  T(\bar{x},\delta) := \inf\Bigl\{ k\in\N\ :\ \sup_{t\geq k}d(x_t,\hat{x}_t) \leq \delta \Bigr\},%
\end{equation*}
where the infimum is defined as $+\infty$ if the corresponding set is empty. Note that $T(\bar{x},\delta)$ depends measurably on $\bar{x}$. Define%
\begin{equation*}
  B^K(\delta) := \left\{ \bar{x} \in \supp\mu\ :\ T(\bar{x},\delta) \leq K \right\} \forall \delta > \ep, K \in \N,%
\end{equation*}
and observe that these sets are measurable. From \eqref{eq_AlmostSureBowenBall_obj} it follows that for every $\delta>\ep$,%
\begin{equation*}
  \lim_{K\rightarrow\infty}\mu(B^K(\delta)) = \mu\Bigl(\bigcup_{K\in\N}B^K(\delta)\Bigr) = 1.%
\end{equation*}
Fixing a $K$ large enough so that $\mu(B^K(\delta)) > 0$, Katok's characterization of metric entropy yields the assertion, which is proved with the same arguments as in the proof of Theorem \ref{thm_topent_lb}, using the simple fact a maximal $(n,\ep)$-separated set contained in some set $K$ also $(n,\ep)$-spans $K$.%
\end{IEEEproof}

In the following, we consider (E3). To obtain a lower bound, we restrict the encoder to have finite memory and be periodic.%

\begin{theorem}\label{thm_metricent_lb_detn}
Consider the estimation objective (E3) for an initial measure $\pi_0$ which is stationary and ergodic under the Markov chain $(x_t)_{t\in\Z_+}$. Additionally, assume that there exists $\tau>0$ so that the coder map $\delta_t$ is of the form%
\begin{equation}\label{constraintE}
  q_t = \delta_t(x_{[t-\tau+1,t]})%
\end{equation}
and is periodic so that $\delta_{t + \tau} \equiv \delta_t$. Further assume that the estimator map is of the form%
\begin{equation*}
  \hat{x}_t = \gamma_t(q_{[t-\tau+1,t]})%
\end{equation*}
and also $\gamma_{t+\tau} \equiv \gamma_t$. Then, if $\supp\mu$ is compact, the smallest channel capacity above which (E3) can be achieved for every $\ep>0$ satisfies
\begin{equation*}
  C_0 \geq h_{\mu}(\theta).%
\end{equation*}
\end{theorem}

\begin{IEEEproof}
{\bf Step (i).} First note that we would obtain a lower bound on $C_0$ if we allowed the periodic encoders to be of the form:
\begin{equation}\label{constraintE2}
  q_t = \delta_t(x_{[t-\tau+1,\infty)})
\end{equation}
that is, if we allow the encoder to have non-causal access to the realizations of $x_t$. Note that every encoder policy of the form (\ref{constraintE}) would be of the form (\ref{constraintE2}). We keep the structure of the decoder as is. 

{\bf Step (ii).} The criterion (E3) considered in this paper implies (E3) considered in \cite{KawanYukselITarXiv} with the distortion metric $d$ being the product metric $D$ introduced in (\ref{productM}) for the dynamical system $\theta$ and $p=2$: This follows since if $\limsup_{t \to \infty}  E[(d(x_{t}, \hat{x}_{t})^2 ] \leq \ep$, we have that with $\bar{x}_t = (x_t, x_{t+1},\ldots)$ and $\hat{\bar{x}}_t = (\hat{x}_t,\hat{x}_{t+1},\ldots)$,
\begin{eqnarray}
&&\limsup_{t \to \infty} E[D(\bar{x}_t, \hat{\bar{x}}_t)^2] \nonumber \\
&&=\limsup_{t \to \infty} E\bigg[\bigg(\sum_{i=0}^{\infty} 2^{-i} { d(x_{t+i}, \hat{x}_{t+i}) \over 1+ d(x_{t+i}, \hat{x}_{t+i})}\bigg)^2\bigg]  \nonumber \\
&&\leq \limsup_{t \to \infty} E\bigg[4 \bigg(\sum_{i=0}^{\infty} 2^{-{(i+1)}} { d(x_{t+i}, \hat{x}_{t+i}) \over 1+ d(x_{t+i}, \hat{x}_{t+i})}\bigg)^2\bigg] \nonumber \\
&&\leq  4 \limsup_{t \to \infty} \sum_{i=0}^{\infty} 2^{-{(i+1)}} E\bigg[ \bigg({ d(x_{t+i}, \hat{x}_{t+i}) \over 1+ d(x_{t+i}, \hat{x}_{t+i})}\bigg)^2\bigg] \label{Jensen1} \nonumber\\
&&\leq  4\sum_{i=0}^{\infty} 2^{-{(i+1)}} \limsup_{t \to \infty}  E\bigg[ \bigg({ d(x_{t+i}, \hat{x}_{t+i}) \over 1+ d(x_{t+i}, \hat{x}_{t+i})}\bigg)^2\bigg] \nonumber \\
&&\leq  4\sum_{i=0}^{\infty} 2^{-{(i+1)}} \limsup_{t \to \infty}  E\bigg[ \bigg({ d(x_{t+i}, \hat{x}_{t+i})^2 \over 1}\bigg)\bigg] \nonumber \\
&&=  4\sum_{i=0}^{\infty}2^{-{(i+1)}} \limsup_{t \to \infty}  E[(d(x_{t+i}, \hat{x}_{t+i})^2 ] \nonumber \\
&&\leq  4\sum_{i=0}^{\infty} 2^{-{(i+1)}} \ep \nonumber\\
&&= 4\ep =: \bar{\ep}.
\end{eqnarray}
In particular, $\bar{\ep} \to 0$ as $\ep \to 0$. Thus, if (E3) holds for every $\ep>0$, (E3) considered in \cite{KawanYukselITarXiv} also holds for every $\ep>0$. Here, we apply Jensen's inequality in (\ref{Jensen1}).%

{\bf Step (iii).} Thus, if the encoder is of the form (\ref{constraintE2}), the problem can be viewed as an instance of  \cite[Thm.~V.2]{KawanYukselITarXiv} for the dynamical system $\theta$.

Under the stated periodicity assumption and  (\ref{constraintE2}), \cite[Thm.~V.2]{KawanYukselITarXiv} directly implies that $C_0 \geq h_{\mu}(\theta)$.
\end{IEEEproof}

Three remarks are in order.

\begin{remark}
It is worth noting here that for a deterministic dynamical system, the property of being ergodic is typically very restrictive; however, for a stochastic system ergodicity is often very simple to satisfy: The presence of noise often leads to strong mixing conditions which directly leads to ergodicity.
\end{remark}

\begin{remark}\label{connectionsInfoT} The discussion in Theorem \ref{thm_metricent_lb_detn} leads to an interesting relation between the classical information-theoretic problem of optimally encoding (non-causally) sequences of random variables and metric entropy of an infinite-dimensional dynamical system defined via the shift operator. A close look at the proof of \cite[Thm.~V.1]{KawanYukselITarXiv} reveals that under a stationarity and ergodicity assumption, when the channel is noisefree, the lower bound presented in Theorem \ref{thm_metricent_lb_detn} is essentially achievable, provided that the encoder has non-causal access to the source realizations and in particular a large enough {\it look-ahead} is sufficient for an approximately optimal performance. Note though that the decoder is still restricted to be zero-delay.
\end{remark}

\begin{remark}\label{SavkinRemark}
Besides our results in \cite{KawanYukselITarXiv}, a related study to the approach of this section is due to Savkin \cite{Savkin06}. This paper is concerned with nonlinear systems of the form%
\begin{equation*}
  x(t) = F(x(t),\omega(t))%
\end{equation*}
where $\omega(t)$ is interpreted as an uncertainty input or a disturbance. However, no statistical structure is imposed on $\omega$ so that the system can be regarded as deterministic (thus, the formulation is distribution-free). A characterization of the smallest channel capacity $C_0$ above which the state $x(t)$ can be estimated with arbitrary precision and for every initial state $x(0)$ in a specified compact set via a noiseless channel is given by \cite[Thm.~3.1]{Savkin06}. A close inspection of this result shows that it characterizes $C_0$ precisely as the topological entropy of the associated shift operator acting on system trajectories. Moreover, \cite[Thm.~3.2]{Savkin06} shows that under mild assumptions on the system the entropy of this operator is infinite, and hence observation via a finite-capacity channel is not possible.%
\end{remark}

\section{An information-theoretic and probability-theoretic approach}\label{sec_IT}

In this section, we consider a much broader class of channels, so-called \emph{Class A type channels} (see \cite[Def.~8.5.1]{YukselBasarBook}). We restrict ourselves to systems with state space $X = \R^N$ and provide lower bounds of the channel capacity for the objectives (E2) and (E3), using information-theoretic methods.%

\begin{definition}\label{ClassAChannelDefinition}
A channel is said to be of Class A type, if%
\begin{itemize}
\item it satisfies the following Markov chain condition:%
\begin{equation*}
  q'_t \leftrightarrow q_t, q_{[0,t-1]}, q'_{[0,t-1]} \leftrightarrow \{x_{0}, w_s, s \geq 0\},\label{classAMarkov}
\end{equation*}
i.e., almost surely, for all Borel sets $B$,%
\begin{eqnarray*}
&&  P(q'_t \in B| q_t, q_{[0,t-1]}, q'_{[0,t-1]}, x_{0}, w_s, s \geq 0) \\
&&  = P(q'_t \in B| q_t, q_{[0,t-1]}, q'_{[0,t-1]})%
\end{eqnarray*}
for all $t \geq 0$, and%
\item its capacity with feedback is given by%
\begin{eqnarray*}
&&  C = \lim_{T \to \infty} \max_{\{ P(q_t | q_{[0,t-1]},q'_{[0,t-1]}),\ 0 \leq t \leq T-1\}} \\
&&\quad \quad \quad \quad  \frac{1}{T} I(q_{[0,T-1]} \to q'_{[0,T-1]}),%
\end{eqnarray*}
where the directed mutual information is defined by%
\begin{equation*}
  I(q_{[0,T-1]} \to q'_{[0,T-1]}) := \sum_{t=1}^{T-1} I(q_{[0,t]};q'_t|q'_{[0,t-1]}) + I(q_0;q'_0)%
\end{equation*}
\end{itemize}
\end{definition}

Discrete noiseless channels and memoryless channels belong to this class; for such channels, feedback does not increase the capacity \cite{Cover}. Class A type channels also include finite state stationary Markov channels which are indecomposable \cite{PermuterWeissmanGoldsmith}, and non-Markov channels which satisfy certain symmetry properties \cite{SenAlaYukIT}. Further examples can be found in \cite{DaboraGoldsmith,TatikondaIT}.%

\begin{theorem}\label{thm_ITnegResult1}
Consider system \eqref{eq_stochsys} with state space $X = \R^N$. Suppose that%
\begin{equation*}
  \limsup_{T \to \infty}\frac{1}{T}\sum_{t=1}^{T-1}h(x_t|x_{t-1}) > -\infty%
\end{equation*}
and $h(x_t) < \infty$ for all $t \in \Z_+$. Then, under (E3) (and thus under (E1)),%
\begin{equation*}
  C_{\ep} \geq \bigg(\limsup_{T \to \infty} \frac{1}{T} \sum_{t=1}^{T-1} h(x_t|x_{t-1})\bigg) - \frac{N}{2}\log(2 \pi \rme \ep).%
\end{equation*}
In particular, $C_0 = \infty$.%
\end{theorem}

\begin{IEEEproof}
Let $(\ep_t)_{t\in\Z_+}$ be a sequence of non-negative real numbers so that $E[\|x_t - \hat{x}_t\|^2] \leq \ep_t$ for all $t \in \Z_+$ and $\limsup_{t\rightarrow\infty}\ep_t \leq \ep$. Observe that for every $t > 0$ we have%
\begin{eqnarray}\label{usefulStep}
&&  I(q'_t;q_{[0,t]}|q'_{[0,t-1]}) \nonumber \\
&=& H(q'_t|q'_{[0,t-1]}) -  H(q'_t|q_{[0,t]},q'_{[0,t-1]}) \nonumber \\
&=& H(q'_t|q'_{[0,t-1]}) -  H(q'_t|q_{[0,t]},x_t,q'_{[0,t-1]}) \label{classAassumption} \\
&\geq& H(q'_t|q'_{[0,t-1]}) -  H(q'_t|x_t,q'_{[0,t-1]}) \nonumber \\
&=& I(x_t; q'_t|q'_{[0,t-1]}). \nonumber
\end{eqnarray}
Here, \eqref{classAassumption} follows from the assumption that the channel is of Class A type. Define%
\begin{equation*}
  R_T := \max_{\{ P(q_t | q_{[0,t-1]},q'_{[0,t-1]}),\ 0 \leq t \leq T-1\}} \frac{1}{T}\sum_{t=0}^{T-1} I(q'_t; q_{[0,t]} | q'_{[0,t-1]})%
\end{equation*}
Now consider the following identities and inequalities:%
\begin{eqnarray}
&&\lim_{T \to \infty} R_T\nonumber \allowdisplaybreaks \\
&\geq& \limsup_{T \to \infty} \frac{1}{T} \bigg( \sum_{t=1}^{T-1} I(x_t; q'_t| q'_{[0,t-1]})) + I(x_0;q'_0) \bigg) \nonumber \allowdisplaybreaks \\
 &=& \limsup_{T \to \infty} \frac{1}{T}\sum_{t=1}^{T-1} \bigg(h(x_t | q'_{[0,t-1]}) - h(x_t|q'_{[0,t]})\bigg)\nonumber \allowdisplaybreaks \\
 &\geq & \limsup_{T \to \infty} \frac{1}{T}\sum_{t=1}^{T-1} \bigg(h(x_t | x_{[0,t-1]},q'_{[0,t-1]}) - h(x_t|q'_{[0,t]})\bigg)\nonumber \allowdisplaybreaks\\
 &=& \limsup_{T \to \infty} \frac{1}{T}\sum_{t=1}^{T-1} \bigg(h(x_t | x_{[0,t-1]},q'_{[0,t-1]}) - h(x_t - \hat{x}_t|q'_{[0,t]})\bigg)\nonumber \allowdisplaybreaks\\
 &\geq& \limsup_{T \to \infty} \frac{1}{T}\sum_{t=1}^{T-1} \bigg(h(x_t | x_{[0,t-1]},q'_{[0,t-1]}) - h(x_t - \hat{x}_t)\bigg)\nonumber \allowdisplaybreaks\\
 &\geq& \limsup_{T \to \infty} \frac{1}{T}\sum_{t=1}^{T-1} \bigg(h(x_t | x_{[0,t-1]},q'_{[0,t-1]}) - \frac{N}{2}\log(2\pi e \ep_t)\bigg)\nonumber \allowdisplaybreaks\\
 &=& \limsup_{T \to \infty} \frac{1}{T}\bigg( \sum_{t=1}^{T-1} h(x_t | x_{t-1}) \bigg) - \frac{N}{2}\log(2\pi \rme \ep).\label{eq_denklem}%
\end{eqnarray}
Here, the second inequality uses the property that entropy decreases under conditioning on more information. The second equality follows from the fact that $\hat{x}_t$ is a function of $q'_{[0,t]}$, and the last inequality follows from that fact that among all real random variables $X$ that satisfy a given second moment constraint $E[X^2] \leq \ep$, a Gaussian maximizes the entropy and the differential entropy in this case is given by $\frac{1}{2}\log(2\pi \rme \ep)$. Using the fact that for an $n$-dimensional vector $X= [X_1,\ldots,X_n]^T$, $h(X) = h(X_1) + \sum_{i=2}^n h(X_i | X_{[1,i-1]}) \leq \sum_{i=1}^n h(X_i)$, it follows with $E[\|x_t - \hat{x}_t\|^2] \leq \ep_t$ that $h(x_t - \hat{x}_t) \leq \frac{n}{2}\log(2\pi \rme \ep_t)$. The final equality then follows from the fact that conditioned on $x_{t-1}$, $x_t$ and $q'_{[0,t-1]}$ are independent. For the final result, in \eqref{eq_denklem}, taking the limit as $\ep \to 0$, $\log(\ep) \to -\infty$, and $C_0 = \infty$ follows.%
\end{IEEEproof}

\begin{theorem}\label{thm_ITnegResult2}
Suppose that $X \subset \R^N$ and the system given by $x_{t+1} = f(x_t,w_t)$ is so that for all Borel sets $B \subset \R^N$,%
\begin{equation*}
  P(x_{t+1} \in B | x_t=x) \leq K \lambda(B),%
\end{equation*}
where $\lambda$ denotes the Lebesgue measure, $K \in \mathbb{R}_+$, and $w_t$ is an i.i.d.~noise process. Then, for the objective (E2), $C_0 = \infty$. 
\end{theorem}

A special case for the above is a system of the form%
\begin{equation*}
  x_{t+1} = f(x_t) + w_t,%
\end{equation*}
where $w_t \sim \nu$ with the noise measure $\nu$ admitting a bounded density function.%


\begin{IEEEproof}
Given a finite alphabet channel with $|{\cal M}'| < \infty$, for a given time $t>0$ under any encoding and decoding policy, there exists a finite partition of the state space $X$ for encoding $x_t$ leading to $\hat{x}_t$. Thus there exists $\bar{\ep}>0$ so that for all $\ep \in (0,\bar{\ep})$, for each set%
\begin{equation*}
  A_t(q'_0,\ldots,q'_t) := \left\{x \in \R^N : d\left(x,\hat{x}_t(q'_0,\ldots,q'_{t-1},q'_t)\right) \geq \ep \right\},%
\end{equation*}
where $q_0',\ldots,q_t' \in \MC'$, we find that%
\begin{align*}
  &P\bigl(x_t \in A_t(q'_0,\ldots,q'_t) \bigl| x_{[0,t-1]}, q'_{[0,t-1]}\bigr)\allowdisplaybreaks \\
  &= \sum_{q' \in \MC'} P\left(q'_t = q' | x_{[0,t-1]}, q'_{[0,t-1]}\right) \allowdisplaybreaks\\
	&\qquad \times P\bigl(x_t \in A_t(q'_0,\ldots,q'_t)\bigl| x_{[0,t-1]}, q'_{[0,t-1]},q'_t=q'\bigr) \allowdisplaybreaks\\
 &\geq 1 - \sum_{q' \in \MC'} P\left(q'_{t}=q' | x_{[0,t-1]}, q'_{[0,t-1]}\right) \allowdisplaybreaks\\
&\times P\bigl(d(x_t,\hat{x}_t(q'_0,\ldots,q'_{t-1},q'))  < \ep \bigl| x_{[0,t-1]}, q'_{[0,t-1]},q'_t=q' \bigr) \allowdisplaybreaks\\
&\geq 1 - |\MC'| K \lambda(B_{\ep}(0)) > 0.%
\end{align*}
This implies that
\begin{equation}\label{eq_boundJ}
  P\bigg(x_t \in A_t(q'_0,\ldots,q'_t) \bigg| x_{[0,t-1]}, q'_{[0,t-1]} \bigg) > 0%
\end{equation}
uniform over all realizations of $x_{[0,t-1]}, q'_{[0,t-1]}$.

Let%
%
\begin{equation*}
  \eta := \sum_{t=1}^{\infty} \unit_{\{x_t \in A_t(q'_0,\ldots,q'_t)\}}.%
\end{equation*}
Our goal is to show that $\eta = \infty$ almost surely, leading to the desired conclusion. 
Let
\[\tau(1) = \min\{t > 0: x_t \in A_t(q'_0,\ldots,q'_t)\}\] 
and for $z > 1, z \in \mathbb{N}$
\[\tau(z) = \min\{t > \tau(z-1): x_t \in A_t(q'_0,\ldots,q'_t)\}.\]
It follows that $P(\tau(1) < \infty)=1$ by a repeated use of \eqref{eq_boundJ}, since the event $\tau(1)=\infty$ would imply that the event (whose probability is lower bounded by \eqref{eq_boundJ}) would be avoided infinitely many times leading to a zero measure. Thus, $P(\eta \geq 1) = 1$. By a repeated use of \eqref{eq_boundJ} and induction if $P(\eta \geq k-1) = 1$, we have that
\begin{align*}
  P(\eta \geq k) &= P(\eta \geq k, \eta \geq k-1)\\
	&= P(\tau(1) < \infty | \FC_{\tau(k-1)} ) P(\eta \geq k-1) = 1,
\end{align*}
where $\FC_{\tau(k-1)}$ is the $\sigma$-field generated by $\{x_s, q'_s\}$ up to time $\tau(k-1)$. Thus, for every $k \in \N$, $P(\eta \geq k)=1$, and it follows by continuity in probability that $P(\eta = \infty) = \lim_{k \to \infty} P(\eta \geq k)= 1$. Hence, for any finite communication rate, almost sure boundedness is not possible for arbitrarily small $\ep > 0$.%
\end{IEEEproof}

\section{Achievability bounds}

\subsection{Coding of deterministic dynamical systems over noisy communication channels}\label{sec_noisefree}

In this section, we show that for a noisefree system a discrete memoryless noisy communication channel is no obstruction for achieving the objectives (E2) and (E3) with finite capacity. More precisely, we prove the following theorem.%

\begin{theorem}\label{thm_upperbound}
Consider a nonlinear deterministic system $x_{t+1} = f(x_t)$ given by a continuous map $f:X\rightarrow X$ on a compact metric space $X$, estimated via a discrete memoryless channel (DMC). Then, for the asymptotic estimation objectives (E2) and (E3), we have
\begin{equation*}
  C_0 \leq h_{\tp}(f).%
\end{equation*}
\end{theorem}

\begin{IEEEproof}
It suffices to prove the result for (E2), since for a compact metric space, (E2) implies (E3); therefore the construction below also applies for the objective (E3).

Without loss of generality, we may assume that $h_{\tp}(f) < \infty$, since otherwise the statement trivially holds. Then it suffices to show that for any $\ep>0$ the estimation objective can be achieved whenever the channel capacity satisfies $C > h_{\tp}(f)$. Since the capacity of a DMC can take any positive value, it follows that $C_{\ep} \leq h_{\tp}(f)$ for every $\ep>0$ and thus $C_0 \leq h_{\tp}(f)$. 

Now, consider a channel with capacity $C > h_{\tp}(f)$ and fix $\ep>0$. Recall that the input alphabet is denoted by $\MC$ and the output alphabet by $\MC'$. By the random coding construction of Shannon \cite{GallagerIT85}, we can achieve a rate $R$ satisfying%
\begin{equation}\label{eq_ratecondition}
  h_{\tp}(f) < R < C%
\end{equation}
with a sequence of increasing sets $\{1,\ldots,M_n\}$ of input messages so that for all $n$,%
\begin{equation}\label{eq_rateSeqC}
  2^{nR} \leq M_n \mbox{\quad and\quad } \lim_{ n \to \infty } {1 \over n} \log M_n = C.%
\end{equation}
Furthermore, there exists a sequence of encoders $E^n:\{1,\ldots,M_n\} \rightarrow \MC^n$, yielding codewords $x^n(1),\ldots,x^n(M_n)$, and a sequence of decoders $D^n:(\MC')^n \rightarrow \{1,\ldots,M_n\}$ so that%
\begin{equation*}
  P( D^n(q'_{[0,n-1]}) \neq c| q_{[0,n-1]} = x^n(c) ) \leq \rme^{-nE(R) + o(n)},%
\end{equation*}
uniformly for all $c \in \{1,\ldots,M_n\}$. Here ${o(n) \over n}\to 0$ as $n \to \infty$ and $E(R) > 0$. In particular, we observe that with $c_n \in \{1,\ldots,M_n\}$ being the message transmitted and $D^n(q'_{[0,n-1]})$ the decoder output,%
\begin{align*}
& P( D^n(q'_{[0,n-1]}) \neq c_n ) \nonumber \\
& = \sum_{c  \in \{1,\ldots,M_n\}}P( D^n(q'_{[0,n-1]}) \neq c | q_{[0,n-1]} = x^n(c) )  \nonumber \\
& \qquad \qquad  \qquad \times P(q_{[0,n-1]} = x^n(c))\\
& \leq \rme^{-n E (R) + o(n)}.%
\end{align*}
This also implies that the bound holds even when the messages to be transmitted are not uniformly distributed. Thus, for the sequence of encoders and decoders constructed above we have%
\begin{equation*}
  \sum_n P( D^n(q'_{[0,n-1]}) \neq c_n ) \leq \sum_n \rme^{-n E (R) + o(n)} < \infty.%
\end{equation*}
The Borel-Cantelli Lemma then implies%
\begin{equation}\label{eq_almostSureRecovery}
  P\Bigl(\Bigl\{D^n(q'_{[0,n-1]}) \neq c_n \ \mbox{infinitely often} \Bigr\} \Bigr) = 0.%
\end{equation}
Now we choose $\delta \in (0,\ep)$ so that, by uniform continuity,%
\begin{equation}\label{eq_unco}
  d(x,y) < \delta \quad\Rightarrow\quad d(f(x),f(y)) < \ep \mbox{\quad for all\ } x,y\in X.%
\end{equation}
Furthermore, we choose $N$ sufficiently large so that%
\begin{equation}\label{eq_kchoice}
  r_{\spn}(n,\delta) \leq M_n \mbox{\quad for all\ } n \geq N.%
\end{equation}
This is possible, because by \eqref{eq_ratecondition} and \eqref{eq_rateSeqC}, for every $n \in \N$ we have%
\begin{eqnarray*}
&&  \limsup_{n\rightarrow\infty}\frac{1}{n}\log r_{\spn}(n,\delta) \leq h_{\tp}(f) \\
&&  < R = \frac{1}{n}\log 2^{nR} \leq \frac{1}{n}\log M_n.%
\end{eqnarray*}
Let $S_j$ be a $(j,\ep)$-spanning set of cardinality $r_{\spn}(j,\delta)$ and fix injective functions%
\begin{equation*}
  \iota_j:S_j \rightarrow \{1,\ldots,M_j\}.%
\end{equation*}
In fact, by possibly enlarging the set $S_j$, we can assume that $\iota_j$ is bijective. For any $a\in X$ let $x^*_j(a)$ denote a fixed element of $S_j$ satisfying $d(f^t(x^*(a)),f^t(a)) \leq \delta$ for $0 \leq t \leq j-1$.%

Define sampling times by%
\begin{equation*}
  \tau_0 := 0 \mbox{\quad and\quad} \tau_{j+1} := \tau_j + j+1 \mbox{ for } j\geq0.%
\end{equation*}
In the following, we specify the coding scheme. In this coding scheme, the encoder from $\tau_j$ to $\tau_{j+1}-1$, encodes the information regarding the orbit of the state from $\tau_{j+1}$ to $\tau_{j+2}-1$. For all $j \geq N$, at time $\tau_j$, use the input $\iota_{j+1}(x_{j+1}^*(f^{j+1}(x_{\tau_j})))$ for the encoder, where $x_{\tau_j}$ is the state at time $\tau_j$. Then $x^{j+1}(\iota_{j+1}(x_{j+1}^*(f^{j+1}(x_{\tau_j}))))$ is sent during the next $j+1$ units of time. This is possible by \eqref{eq_kchoice}. For $j<N$, it is not important what we transmit.%

Let the estimator apply $x^*_{j+1} \circ \iota_{j+1}^{-1}$ to the output of the decoder, obtaining an element $y_{j+1} \in S_{j+1}$, and use $y_{j+1},f(y_{j+1}),\ldots,f^j(y_{j+1}),f^{j+1}(y_{j+1})$ as the estimates during the forthcoming time interval of length $\tau_{j+2}-\tau_{j+1} = \tau_{j+1} - \tau_j + 1 = j + 2$. Then $\delta < \ep$, \eqref{eq_unco} and the fact that $S_{j+1}$ is $(j+1,\delta)$-spanning implies that the desired estimation accuracy is achieved, provided that there was no error in the transmission.%

Now \eqref{eq_almostSureRecovery} implies that after a finite random time, there are no more errors in the transmission. By the analysis above, the errors will be uniformly bounded by $\ep$. Hence, the objective \eqref{eq_AlmostSureBowenBall_obj} is achieved.%
\end{IEEEproof}

We note that the proof above crucially depends on the fact that the system is deterministic and is impractical to implement. The theorem is essentially a possibility result. Note that the proof even does not make use of the fact that the encoder has access to the realizations of the channel output, hence feedback is not utilized. For linear systems, a constructive proof is given in \cite[Thm.~6.4.1]{MatveevSavkin}. We state the following for completeness. Consider the noiseless linear system%
\begin{equation}\label{eq_ls}
  x_{t+1} = Ax_t%
\end{equation}
with $x_t \in \R^N$. The following result, essentially given in \cite[Thm.~6.4.1]{MatveevSavkin}, provides a positive answer to the question whether the estimation objective \eqref{eq_AlmostSureBowenBall_obj} can be achieved, when no noise is present in the system.%


\begin{theorem}\label{NoisyAS}
Consider system \eqref{eq_ls} estimated over a memoryless erasure channel with finite capacity. Then, for (E2), we have%
\begin{equation}\label{asStabErasure}
  C_0 \leq \sum_{|\lambda_i|>1} \log\lceil |\lambda_i| \rceil.%
\end{equation}
\end{theorem}

For completeness, we also note that \cite[Cor.~5.3 and Thm.~4.3]{SahaiParts} show that for a discrete memoryless channel it suffices that $C > \sum_{|\lambda_i| \geq 1} \log|\lambda_i|$ for the existence of encoder and controller policies leading to almost sure stability.

\begin{remark}{\bf An implication on the achievability for non-causal codes over discrete memoryless channels.} In Section \ref{sec_dynsys} we utilized the fact that one can view a stochastic dynamical system as a deterministic one under the shift operator. Building on a similar argument as that in the proof of Theorem \ref{thm_metricent_lb_detn}, it can be shown that (E2) considered in this paper implies and is implied by (E2) considered in \cite{KawanYukselITarXiv} with the distortion metric $d$ being the product metric $D$ introduced in (\ref{productM}) for the dynamical system $\theta$. Therefore, provided that the encoder has access to future realizations of the state sequence, the proof of Theorem \ref{thm_upperbound} implies an achievability result: If the encoder has non-causal access to the source realizations, for (E2) it suffices to have $C_0 \leq h_{\tp}(\theta_{|\supp\mu})$ and this can be achieved through the construction in the proof of Theorem \ref{thm_upperbound} through an encoder which has non-causal access to the future state realizations. Note though that the decoder is still restricted to be zero-delay. We note that in the traditional Shannon theory, block codes are allowed to be non-causal. 
\end{remark}

\section{Examples}\label{sec_examples}

\begin{example}
Consider the diffeomorphism $f_A:\T^2 \rightarrow \T^2$ on the $2$-torus $\T^2 = \R^2/\Z^2$, induced by the linear map%
\begin{equation}\label{eq_catmap}
  A = \left(\begin{array}{cc} 2 & 1 \\ 1 & 1 \end{array}\right),%
\end{equation}
i.e., $f_A(x + \Z^2) = Ax + \Z^2$. Note that the inverse of $f_A$ is given by $f_{A^{-1}}$, which is well-defined, since $\det A = 1$. The map $f_A$ is known as \emph{Arnold's Cat Map}, and is one of the simplest examples of an Anosov diffeomorphism.%

Since $\det \rmD f_A(x) \equiv \det A \equiv 1$, the map $f_A$ is area-preserving. The eigenvalues of the matrix $A$ are given by%
\begin{equation*}
  \gamma_1 = -\frac{3}{2} - \frac{1}{2}\sqrt{5} \mbox{\quad and\quad} \gamma_2 = -\frac{3}{2} + \frac{1}{2}\sqrt{5}%
\end{equation*}
and satisfy $|\gamma_1| > 1 > |\gamma_2|$. It is well-known that both the topological entropy and the metric entropy of $f_A$ with respect to Lebesgue measure are given by $\log|\gamma_1| > 0$. Hence, Theorem \ref{thm_upperbound} yields%
\begin{equation*}
  C_0 \leq \log\left|-\frac{3}{2} - \frac{1}{2}\sqrt{5}\right| \approx 1.3885%
\end{equation*}
for (E2) to be achieved over a DMC.%

Now, suppose we have additive noise for the cat map so that $f_A(x + \Z^2) = Ax + w + \Z^2$, with $w \sim \nu$ which admits a density supported on $\T^2$. In this case, the map $f^x:\T^2 \rightarrow \T^2$, $w \mapsto Ax+w$, is invertible and $(x,y) \mapsto (f^x)^{-1}(y) = y -Ax$ is continuous. By Corollary \ref{cor_inv}, $C_0 = \infty$ for the estimation objective (E1), under a stationary initial measure. For the objective (E2) it can be shown that, under corresponding initial measure conditions, Theorem \ref{thm_metricent_lb_det} leads to $C_0 = \infty$.%
\end{example}

\section{Discussion and Concluding Remarks}\label{sec_conclusion}

In this paper, we considered three estimation objectives for stochastic non-linear systems $x_{t+1} = f(x_t,w_t)$ with i.i.d.~noise $(w_t)$, assuming that the estimator receives state information via a noisy channel of finite capacity. 
\begin{enumerate}
\item[(1)] For noiseless channels, assuming that the initial measure $\pi_0$ is stationary, we proved that $C_0$ is bounded below by either the topological or the metric entropy of a shift dynamical system on the space of trajectories (Theorems \ref{thm_topent_lb}, \ref{thm_metricent_lb_det} and \ref{thm_metricent_lb_detn}). 
\item[(2)] For systems on Euclidean space and noisy channels, we provided information-theoretic and probability-theoretic conditions enforcing $C_0 = \infty$. In particular, Theorem \ref{thm_ITnegResult1} shows that $C_0 = \infty$ for the quadratic stability objective, whenever%
\begin{equation}\label{eq_cdecond}
  \limsup_{T \to \infty}\frac{1}{T}\sum_{t=1}^{T-1}h(x_t|x_{t-1}) > -\infty.%
\end{equation}
We have a corresponding negative result under (E2) in Theorem \ref{thm_ITnegResult2} for noisy systems which are sufficiently {\it irreducible}. Since $h(x_t|x_{t-1})$ is a measure for the uncertainty of $x_t$ given $x_{t-1}$, the condition \eqref{eq_cdecond} means that the noise on the long run (in average) influences the state process in a substantial way. Similarly to the results in Section \ref{sec_dynsys}, this means that the noise makes the space of relevant trajectories too large (or too complicated) to estimate the state with arbitrarily small error over a finite capacity channel.%
\item[(3)] Compared with our earlier work \cite{KawanYukselITarXiv}, our results reveal that the rate requirements are not robust with respect to the presence of noise: That is, even an arbitrarily small noise may lead to drastic effects in the rate requirements. However, the metric or topological entropy bounds are always present and our lower bounds reduce to those established in \cite{KawanYukselITarXiv}. We also note that the metric entropy definition for random dynamical systems \cite{LedrappierYoung} in the ergodic theory literature is not the answer to the operational questions we proposed in this paper, unlike the one for the deterministic case which precisely answered the operational question (E2). 
\item[(4)] In Section \ref{sec_noisefree} we assumed that the system is deterministic with a compact state space, but the channel is noisy. We proved that in this case $C_0$ is bounded from above by the topological entropy of the system for the asymptotic almost sure objective (thus, leading to an achievability result). Our result strictly generalizes the previously known results in the literature which have considered only linear systems to our knowledge. 
\end{enumerate}

\end{document}